\newtheorem{thm}{Theorem}[section]
\newtheorem{lem}[thm]{Lemma}
\newtheorem{cor}[thm]{Corollary}
\theoremstyle{dfn}
\theoremstyle{rmk}
\numberwithin{equation}{section}
\newcommand{\fr}{\displaystyle \frac}
\newcommand{\rd}{{\rm d}}
\newcommand{\supp}{{\rm supp}}
\newcommand{\bs}{\backslash}
\newcommand{\lam}{\lambda}
\newcommand{\kap}{\kappa}
\newcommand{\sig}{\sigma}
\newcommand{\R}{\mathbb{R}}
\newcommand{\calL}{\mathcal{L}}
\newcommand{\ov}{\overline}
\newcommand{\rar}{\rightarrow}
\newcommand{\lef}{\left}
\newcommand{\rig}{\right}
\newcommand{\rn}{{\R^N}}
\newcommand{\rnrn}{{\R^N\times\R^N}}
\newcommand{\kak}[1]{\lef(#1\rig)}
\newcommand{\chu}[1]{\lef\{#1\rig\}}
\newcommand{\dai}[1]{\lef[#1\rig]}
\newcommand{\norm}[1]{\lef|\lef|#1\rig|\rig|}
\newcommand{\norml}[2]{\lef|\lef|#1\rig|\rig|_{L^{#2}}}
\newcommand{\thmref}[1]{Theorem $\ref{#1}$}
\newcommand{\cirnum}[1]{\raise0.15ex\hbox{\rm \text{\textcircled{\raise-0.1ex\hbox{\footnotesize#1}}}}}
\newcommand{\cdotsnum}[1]{\cdots \raise-0.1ex\hbox{\cirnum{#1}}}
\newcommand{\cdotssen}[1]{\raise0.25ex\hbox{$\cdotsnum{#1}$}}
\begin{document}

\title{Remark on the formula for $L^p$ norm \\
characterized by weak $L^p$ norm}

\author{Yuto Miyadera}
\address{Department of Mathematics, Faculty of Science, Saitama University, Saitama 338-8570, Japan}
\email{y.miyadera.461@ms.saitama-u.ac.jp}


\subjclass[2020]{Primary 26D10 ; Secondary 46E30, 46E35}

\date{June 2, 2023.}


\keywords{Gu-Yung formula, $L^p$ space, weak-$L^p$ space}

\begin{abstract}
In \cite{s=0}, the new formula for $L^p(\R^N)$ norms was given by Gu and Yung which is
characterized by the standard weak-$L^p(\R^{2N})$ norms.  
At first glance, it seems to be able to measure the degree of fractional smoothness
of functions. In this paper, our proof goes in parallel as in \cite{s=0},
but gives another different representation which is obviously independent of
the smoothness of function.
\end{abstract}

\maketitle



\section{Introduction}
We start with the following fractional Sobolev norm, it is called Gagliardo
seminorm in \cite{s=1}, that is for $N\ge1$
\begin{align}\label{gag-norm}
\|u\|^p_{\dot{W}^{s,p}(\R^N)}=\iint_{\R^N\times\R^N}\frac{|u(x)-u(y)|^p}{|x-y|^{N+sp}}dxdy
\end{align}
where $1\le p<\infty,\ 0<s<1$. The expression of this integrant tempts us to expect $u(x)\to u(y)$ as $x\to y$ in 
some sense. For example, in the case $p=2$, we have the well-known equivalent norm, that is the 
homogeneous Sobolev norm $\dot{H}^s$, for $0<s<1$
\begin{align*}
\|u\|_{\dot{W}^{s,2}(\R^N)}^2\sim\|u\|_{\dot{H}^s(\R^N)}^2=\int_{\R^N}|\xi|^{2s}|\hat{u}(\xi)|^2d\xi
\end{align*}
where the hat is the Fourier transform $\hat{u}(\xi)=\int_{\R^N}e^{-ix\xi}u(x)dx$, and so the 
index $s$ corresponds to the number of derivatives. Unfortunately, this equivalence formula does not 
arrow the end-point cases $s=0,1$. This is precisely the most important point in this paper.
We also introduce one another norm. For $0\le\gamma<1$ this is a norm for the homogeneous
H\"older space for degree $\gamma$ as follows
\begin{align*}
\|u\|_{\dot\Lambda^{\gamma}}
=\sup_{x,y\in\R^N,x\not=y}
\frac{u(x)-u(y)}{|x-y|^{\gamma}}.
\end{align*}
A function belonging to this space is called H\"older continuous function of degree $\gamma$.  
Even though the norm for $\dot\Lambda^\gamma$ will not appear again after this, anyway, 
these are examples of norms that measure the smoothness of a function. \\

We introduce the results shown by Gu and Yung \cite{s=0} as follows. 
\begin{thm}[Qingsond Gu and Po-Lam Yung 2021]\label{thmGY}
For every $N\geq1$,\ there exists $c_1,c_2>0$ such that for all $1\leq p<\infty$ and all $u\in L^p(\rn)$, 
the following two inequalities hold,
\begin{align}\label{s=0}
c_1\norm{u}_{L^{p}(\R^N)}^p\leq&\dai{\fr{u(x)-u(y)}{|x-y|^{\frac{N}{p}}}}^p_{L^{p,\infty}(\rnrn)}
\leq 2^pc_2\norm{u}_{L^p(\R^N)}^p
\end{align}
where $L^{p,\infty}$ is the weak-$L^p$ space equipped by the quasi-norm as 
\begin{align*}
[f]_{L^{p,\infty}(\rnrn)}
=\sup_{\lambda>0}\kak{\lambda^p\calL^{2N}\kak{\chu{(x,y)\in\rnrn:|f(x,y)|\geq\lambda}}}^\frac{1}{p}.
\end{align*}
Moreover for the set
\begin{align*}
E_{\lambda}:=\chu{(x,y)\in\rnrn:|u(x)-u(y)|\geq\lambda|x-y|^{\frac{N}{p}}}, 
\end{align*}
the following limit holds 
\begin{align}\label{GY2}
\lim_{\lambda\to0+}\lambda^p\calL^{2N}(E_{\lambda})=2\kappa_N\|u\|_{L^p(\R^N)}^p
\end{align}
where $\kappa_N$ is the volume of the unit ball in $\R^N$. 
\end{thm}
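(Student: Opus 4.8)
The plan is to isolate the asymptotics of $\rho(\lambda):=\lambda^p\calL^{2N}(E_\lambda)$ by an anisotropic rescaling that freezes the threshold while driving the two points far apart. Writing $y=x+h$ and then $h=\lambda^{-p/N}z$, the Jacobian $\lambda^{-p}$ cancels the prefactor $\lambda^p$ exactly and yields
\[
\rho(\lambda)=\int_{\rn}\int_{\rn}\mathbf{1}\big[\,|u(x)-u(x+\lambda^{-p/N}z)|\ge |z|^{N/p}\,\big]\,dx\,dz=\int_{\rn}I_\lambda(z)\,dz,
\]
where $I_\lambda(z)$ denotes the inner integral. Heuristically, as $\lambda\to0+$ the shift $v:=\lambda^{-p/N}z$ tends to infinity for each fixed $z\neq0$, so $u(\cdot)$ and its translate $u(\cdot+v)$ decouple: the event $|u(x)-u(x+v)|\ge|z|^{N/p}$ splits into the two asymptotically disjoint events ``$x$ lies in the super-level set while $x+v$ is negligible'' and ``$x+v$ lies in the super-level set while $x$ is negligible'', each contributing $\calL^N(\{|u|\ge|z|^{N/p}\})$. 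This is precisely the origin of the factor $2$.

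To turn this into a theorem I would first secure a $\lambda$-independent integrable majorant, so as to dispense with any density argument. From $|u(x)-u(x+v)|\ge w$ one of $|u(x)|,|u(x+v)|$ must be $\ge w/2$; hence by translation invariance $I_\lambda(z)\le 2\calL^N(\{|u|\ge |z|^{N/p}/2\})=:G(z)$, and the layer-cake (Tonelli) identity
\[
\int_{\rn}\calL^N(\{|u|\ge |z|^{N/p}/2\})\,dz=\int_{\rn}\calL^N(\{|z|\le(2|u(x)|)^{p/N}\})\,dx=2^p\kappa_N\|u\|_{L^p(\rn)}^p
\]
shows $G\in L^1(\rn)$. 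The same computation without the factor $2$ inside evaluates the conjectured limit, $\int_{\rn}2\calL^N(\{|u|\ge|z|^{N/p}\})\,dz=2\kappa_N\|u\|_{L^p(\rn)}^p$. Thus, once the pointwise convergence $I_\lambda(z)\to 2\calL^N(\{|u|\ge|z|^{N/p}\})$ is established for almost every $z$, dominated convergence against $G$ (along any sequence $\lambda_k\to0+$) closes the argument.

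The pointwise convergence is the crux. I would estimate $I_\lambda(z)-2\calL^N(\{|u|\ge w\})=\int_{\rn}\psi_v\,dx$, with $\psi_v=\mathbf{1}[|a-b|\ge w]-\mathbf{1}[|a|\ge w]-\mathbf{1}[|b|\ge w]$, where $a=u(x)$, $b=u(x+v)$, $w=|z|^{N/p}$, and control $\calL^N(\{\psi_v\ne0\})$. A case analysis on the pair $(\mathbf{1}[|a|\ge w],\mathbf{1}[|b|\ge w])$ shows that, up to a set of the form $\{w-\eta\le|u|<w+\eta\}$, the set $\{\psi_v\ne0\}$ is contained in an overlap $\{|u|\ge w/2\}\cap(\{|u|\ge\eta\}-v)$. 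The overlap of two finite-measure sets, one translated by $v$, tends to $0$ as $|v|\to\infty$ (prove it first for bounded sets, where supports separate, then approximate general finite-measure sets in $L^1$). Letting $|v|\to\infty$ and then $\eta\to0$ kills both pieces, provided $w$ is a continuity point of the monotone map $t\mapsto\calL^N(\{|u|>t\})$, so that $\calL^N(\{w-\eta\le|u|<w+\eta\})\to0$. Since this map has at most countably many discontinuities, the exceptional thresholds correspond to a measure-zero set of $z$ (a countable union of spheres), harmless for the $z$-integration.

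I expect the translate-decoupling together with the level-set subtlety to be the main obstacle: pointwise in $x$ the integrand of $I_\lambda(z)$ does \emph{not} converge, since $u(x+v)$ need not tend to $0$ at individual points; the convergence is genuinely a statement about super-level sets escaping to infinity, and it survives only after discarding the at-most-countably-many thresholds at which $|u|$ has a level set of positive measure. Everything else---the rescaling, the uniform majorant, and the two layer-cake evaluations---is routine.
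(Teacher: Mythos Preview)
Your argument is correct and takes a genuinely different route from the paper's. The paper obtains Theorem~\ref{thmGY} as the special case $v=-u$ of Theorem~\ref{thm1}, and proves the latter by the Gu--Yung scheme: first treat compactly supported $u,v$ via a geometric decomposition of $E_\lambda$ according to whether $x,y$ lie in $B_R$ or $B_R^c$, obtaining two-sided bounds with error $O(\lambda^p R^{2N})$; then pass to general $u,v\in L^p$ by writing $u=u_R+u_E$, $v=v_R+v_E$, introducing a splitting parameter $\sigma\in(0,1)$, and letting $R=\lambda^{-p/(4N)}$ and $\sigma\to0$ in a coupled way. Your approach bypasses the truncation/approximation entirely: the dilation $h=\lambda^{-p/N}z$ normalizes the threshold, the layer-cake bound $I_\lambda(z)\le 2\,\calL^N(\{|u|\ge|z|^{N/p}/2\})$ furnishes a $\lambda$-free integrable majorant (and, incidentally, the upper bound in \eqref{s=0} with $c_2=2\kappa_N$), and the pointwise limit $I_\lambda(z)\to 2\,\calL^N(\{|u|\ge|z|^{N/p}\})$ follows from the decoupling fact that $\calL^N(A\cap(B-v))\to0$ for finite-measure sets as $|v|\to\infty$, together with the observation that the at-most-countably-many atoms of the distribution function of $|u|$ correspond to a null set of spheres in $z$. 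What you gain is a one-pass proof valid directly for every $u\in L^p(\rn)$ and a transparent explanation of the constant $2\kappa_N$ (two asymptotically disjoint super-level sets, each contributing $\kappa_N\|u\|_{L^p}^p$ after the $z$-integration). What the paper's route gains is that the same computation, with no extra effort, yields the two-function formula \eqref{limit0}; your method extends to that setting as well (replace $u(x+v)$ by $-v(x+v)$ in the decoupling step), but this is not written out in your proposal.
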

Theorem \ref{thmGY} was shown in \cite{s=0}. 
In the middle term in \eqref{s=0}, 
we can find the same integrant in the right-hand side in \eqref{gag-norm} with $s=0$, 
although it is estimated in the level set instead of taking integration. 
The result \eqref{s=0} holds for any $u\in L^p(\R^N)$ and which gives the equivalency 
between the middle term characterized by weak $L^p(\R^N\times\R^N)$ and the standard 
$L^p(\R^N)$ norm. Since $L^p$ space does not require those functions with any smoothness, 
the weight function $|x-y|^{N/p}$ in the denominator should be harmless. 
Here we remark that the function $|x|^{-N/p}$ on $x\in\R^N$ is a typical example for 
which it belongs to $L^{p,\infty}(\R^N)$, not to $L^p(\R^N)$. 
We remark one more that, historically speaking, the result by Gu and Yung is a natural continuation of the work 
of the surprising formula for the homogeneous 
Sobolev norms with first-order derivative, that is the case $s=1$ and using level sets, 
by Brezis, Van Schaftingen, and Yung \cite{s=1}. And before them, there are results on asymptotic 
research on $\dot{W}^{s,p}$ as $s\to1$ and $s\to0$ in \cite{asymp1} and \cite{asymp0} respectively. 
They estimate the integral of the norm \eqref{gag-norm} itself, did not use level sets. 
Especially the result on $s\to1$ in \cite{asymp1} is called BBM theorem and which is considered the genesis of these studies.

Now we state our main result.
\begin{thm}\label{thm1}
For every $N\geq1$, there exist constants $c_1,c_2>0$ such that for all $1\leq p<\infty$ and all $u, v\in L^p(\rn)$,
\begin{align}\label{goal1}
\fr{c_1}{2^{p}}\kak{\|u\|_{L^p(\R^N)}+\|v\|_{L^p(\R^N)}}^p 
&\leq\left[\fr{u(x)+v(y)}{|x-y|^\frac{N}{p}}\right]_{L^{p,\infty}(\R^N\times\R^N)}^p \\
&\leq c_2\kak{\|u\|_{L^P(\R^N)}+\|v\|_{L^p(\R^N)}}^p. \label{goal2}
\end{align}
Moreover for the set
\begin{align}
E_\lambda=\chu{(x,y)\in\rnrn:|u(x)+v(y)|\ge\lambda|x-y|^\frac{N}{p}}. \label{*1}
\end{align}
then the following limit holds 
\begin{align}\label{limit0}
\lim_{\lambda\to0+}\lambda^p\calL^{2N}(E_{\lambda})
=\kappa_N\left(\|u\|_{L^p(\R^N)}^p+\|v\|_{L^p(\R^N)}^p\right). 
\end{align}
\end{thm}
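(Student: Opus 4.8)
The plan is to establish the three assertions in the order: the upper bound \eqref{goal2}, then the limit \eqref{limit0} (first for compactly supported functions, then in general), and finally to read off the lower bound \eqref{goal1} from the limit. This parallels the strategy of \cite{s=0}.

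First, the upper bound is soft. From $|u(x)+v(y)|\le|u(x)|+|v(y)|$ one gets the inclusion $E_\lambda\subseteq A_\lambda\cup B_\lambda$, where $A_\lambda=\{(x,y)\in\rnrn:|u(x)|\ge\frac{\lambda}{2}|x-y|^{N/p}\}$ and $B_\lambda=\{(x,y)\in\rnrn:|v(y)|\ge\frac{\lambda}{2}|x-y|^{N/p}\}$. Freezing $x$ in $A_\lambda$ (respectively $y$ in $B_\lambda$), the remaining section is a ball of radius $(2|u(x)|/\lambda)^{p/N}$ (respectively $(2|v(y)|/\lambda)^{p/N}$), whose measure integrates by Tonelli to $2^p\kappa_N\lambda^{-p}\|u\|_{L^p}^p$ (respectively $2^p\kappa_N\lambda^{-p}\|v\|_{L^p}^p$). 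Hence $\lambda^p\calL^{2N}(E_\lambda)\le 2^p\kappa_N(\|u\|_{L^p}^p+\|v\|_{L^p}^p)$ uniformly in $\lambda$, which gives \eqref{goal2}; this uniform-in-$\lambda$ bound is also exactly what makes the later density argument work.

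For the limit I would first assume $u,v\in L^p(\R^N)$ have compact support. The essential point, and the place where the argument differs from a naive computation, is to split $E_\lambda$ according to the supports: write $E_\lambda=E_\lambda^1\sqcup E_\lambda^2\sqcup E_\lambda^3$ (up to a null set), where on $E_\lambda^1$ one has $u(x)\neq0=v(y)$, on $E_\lambda^2$ one has $v(y)\neq0=u(x)$, and on $E_\lambda^3$ both $u(x)\neq0$ and $v(y)\neq0$ (the remaining case $u(x)=0=v(y)$ contributes nothing). On $E_\lambda^1$ the constraint is $|u(x)|\ge\lambda|x-y|^{N/p}$, so freezing $x$ the $y$-section is a ball of radius $(|u(x)|/\lambda)^{p/N}$ with at most the fixed constant $\calL^N(\supp v)$ removed; this yields $\lambda^p\calL^{2N}(E_\lambda^1)\to\kappa_N\|u\|_{L^p}^p$. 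Symmetrically $\lambda^p\calL^{2N}(E_\lambda^2)\to\kappa_N\|v\|_{L^p}^p$, while $E_\lambda^3$ sits in the product of two bounded sets, so $\lambda^p\calL^{2N}(E_\lambda^3)\to0$. Summing gives \eqref{limit0} for compactly supported $u,v$. I would stress that the two terms $\|u\|_{L^p}^p$ and $\|v\|_{L^p}^p$ genuinely come from two disjoint, far-apart regions (one point in a support, the other escaping to infinity at scale $\lambda^{-p/N}$); a one-sided dominated-convergence computation on a single iterated integral sees only one of them, because the rest of the mass escapes to infinity.

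To remove the compact-support hypothesis I would combine density with the quasi-subadditivity of weak $L^p$. Approximating $u,v$ in $L^p(\R^N)$ by compactly supported $u_k,v_k$ and setting $f=\frac{u(x)+v(y)}{|x-y|^{N/p}}$, $g_k=\frac{u_k(x)+v_k(y)}{|x-y|^{N/p}}$, and $r_k=f-g_k$, the inclusion $\{|f|\ge\lambda\}\subseteq\{|g_k|\ge(1-\theta)\lambda\}\cup\{|r_k|\ge\theta\lambda\}$ for $\theta\in(0,1)$, together with the known limit for $g_k$ and the uniform bound $\lambda^p\calL^{2N}(\{|r_k|\ge\theta\lambda\})\le\theta^{-p}2^p\kappa_N(\|u-u_k\|_{L^p}^p+\|v-v_k\|_{L^p}^p)$ from the upper bound, gives after letting $\lambda\to0$, then $k\to\infty$, then $\theta\to0$ that $\limsup_{\lambda\to0}\lambda^p\calL^{2N}(E_\lambda)\le\kappa_N(\|u\|_{L^p}^p+\|v\|_{L^p}^p)$. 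Interchanging the roles of $f$ and $g_k$ gives the matching bound for the $\liminf$, so the limit exists and equals $\kappa_N(\|u\|_{L^p}^p+\|v\|_{L^p}^p)$, proving \eqref{limit0}. Then \eqref{goal1} is immediate, since the weak-$L^p$ quasi-norm is a supremum over $\lambda$ and hence dominates this limit, while $\kappa_N(\|u\|_{L^p}^p+\|v\|_{L^p}^p)\ge\frac{\kappa_N}{2^p}(\|u\|_{L^p}+\|v\|_{L^p})^p$; thus $c_1=\kappa_N$ works. I expect the main obstacle to be precisely this passage from compactly supported to general $u,v$: because weak $L^p$ is only a quasi-norm, the limit cannot be transported by continuity, and the $\theta$-splitting with its careful order of limits is what keeps the error terms controlled.
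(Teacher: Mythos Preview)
Your proposal is correct and follows essentially the same three-step strategy as the paper: the upper bound via a quasi-triangle splitting and the ball computation, the limit for compactly supported $u,v$ by isolating the bounded ``both supported'' piece from the two exact ``one supported'' pieces, and then passage to general $u,v$ by approximation with a $\theta$-splitting controlled by the already-proved upper bound. The only cosmetic differences are that the paper partitions the compact case by whether $x,y$ lie in $B_R$ rather than by whether $u(x),v(y)$ vanish, and in the general case it couples $R=\lambda^{-p/(4N)}$ and $\sigma$ to $\lambda$ in a single limit instead of your three successive limits $\lambda\to0$, $k\to\infty$, $\theta\to0$; neither change affects the substance of the argument.
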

We remark for this theorem that the two functions $u$ and $v$ are independent of each other. 
However, we may set some relationship between $u$ and $v$. 
As a corollary of \thmref{thm1}, we may derive Theorem \ref{thmGY} by setting $u=u, v=-u$. 
We also have the other formulas for the equivalency with $L^p$ norm. We take $u=|u|, v=\pm|u|$ 
and use the monotone property of $L^{p,\infty}$ norm in its integrants, see Lemma \ref{Lp-lem} below, 
to obtain the following. 
\begin{cor}\label{p2}
For every $N\geq1$, there exist constants $c_1,c_2>0$ such that for all $1\leq p<\infty$ and all $u\in L^p(\rn)$,
\begin{align}
c_1\|u\|_{L^p(\R^N)}^p&\leq\left[\fr{|u(x)|-|u(y)|}{|x-y|^\frac{N}{p}}\right]_{L^{p,\infty}(\R^N\times\R^N)}^p \\
&\leq\left[\fr{|u(x)|+|u(y)|}{|x-y|^\frac{N}{p}}\right]_{L^{p,\infty}(\R^N\times\R^N)}^p 
\leq2^pc_2\norm{u}_{L^p(\R^N)}^p.\label{2.1}
\end{align}
\end{cor}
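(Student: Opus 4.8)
The plan is to deduce this corollary directly from \thmref{thm1} by two well-chosen substitutions, combined with an elementary monotonicity property of the weak-$L^p$ quasi-norm. Because the roles of $u$ and $v$ in \thmref{thm1} are symmetric and $\||u|\|_{L^p(\R^N)}=\|u\|_{L^p(\R^N)}$, no genuinely new estimate is needed; the content is almost entirely the bookkeeping of constants.

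First I would establish the two outer inequalities. For the leftmost bound, apply the lower estimate \eqref{goal1} of \thmref{thm1} with $u$ replaced by $|u|$ and $v$ replaced by $-|u|$. The numerator $u(x)+v(y)$ becomes $|u(x)|-|u(y)|$, while $\||u|\|_{L^p(\R^N)}+\|-|u|\|_{L^p(\R^N)}=2\|u\|_{L^p(\R^N)}$; hence the left-hand side reads $2^{-p}c_1\big(2\|u\|_{L^p(\R^N)}\big)^p=c_1\|u\|_{L^p(\R^N)}^p$, which is exactly the claimed lower bound. For the rightmost bound, apply the upper estimate \eqref{goal2} with $u$ replaced by $|u|$ and $v$ replaced by $+|u|$; now the numerator becomes $|u(x)|+|u(y)|$ and the right-hand side equals $c_2\big(2\|u\|_{L^p(\R^N)}\big)^p=2^pc_2\|u\|_{L^p(\R^N)}^p$, as stated. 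Note that the factor $2^{-p}$ present in \eqref{goal1} but absent in \eqref{goal2} is precisely what produces the asymmetric constants $c_1$ and $2^pc_2$ in the two ends.

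Then I would prove the middle inequality, which is the only step not already contained in \thmref{thm1}. Here I invoke \lemref{Lp-lem}: since $|u(x)|,|u(y)|\ge0$, the pointwise bound $\big||u(x)|-|u(y)|\big|\le |u(x)|+|u(y)|$ holds for every $(x,y)\in\rnrn$, so the integrand $\frac{|u(x)|-|u(y)|}{|x-y|^{N/p}}$ is dominated in absolute value by $\frac{|u(x)|+|u(y)|}{|x-y|^{N/p}}$. By the monotonicity of the $L^{p,\infty}$ quasi-norm this domination passes to the quasi-norms, giving the inequality between the two middle terms.

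I do not expect a real obstacle here, since this is a corollary rather than an independent theorem. The one point deserving care is the constant bookkeeping that makes $c_1$ and $2^pc_2$ appear consistently at the two ends, which I traced above. The monotonicity supplied by \lemref{Lp-lem} is itself elementary: if $|f|\le|g|$ pointwise then $\{|f|\ge\lambda\}\subseteq\{|g|\ge\lambda\}$ for each $\lambda>0$, whence $\calL^{2N}(\{|f|\ge\lambda\})\le\calL^{2N}(\{|g|\ge\lambda\})$ by monotonicity of Lebesgue measure, and taking the supremum over $\lambda>0$ yields $[f]_{L^{p,\infty}}\le[g]_{L^{p,\infty}}$.
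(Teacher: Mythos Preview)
Your proposal is correct and follows essentially the same approach as the paper: the paper also deduces the corollary by substituting $u=|u|$, $v=\pm|u|$ into \thmref{thm1} and invoking the monotonicity of the $L^{p,\infty}$ quasi-norm from \lemref{Lp-lem} for the middle inequality. Your additional bookkeeping of the constants $c_1$ and $2^pc_2$ is accurate and makes explicit what the paper leaves implicit.
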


\ \\

Here we give one investigation, that is the case of $u_2=0$ in Theorem \ref{thm1}.
The calculation is easy but this is the heart of the series of discussions in the papers \cite{s=0} and this paper. 
If we take $u_1=u, u_2=0$, then the set \eqref{*1} is the all  $(x,y)$ with $|u(x)|\ge\lambda|x-y|^{\frac{N}{p}}$ 
which is the ball centered at $x\in\R^N$ and radius $(|u(x)|/\lambda)^{p/N}$ for $y$. 
Hence,
\begin{align*}
{\lam^p}\calL^{2N}\kak{E_\lam}=&{\lam^p}\int_\rn\calL^{N}\kak{\ov{B\kak{x,\kak{\fr{|u(x)|}{\lam}}^\frac{p}{N}}}}\rd x\\
=&\int_\rn{\kap_N|u(x)|^p}\rd x=\kap_N\norm{u}_{L^p}^p,
\end{align*}
where $\kap_N$ is denoted as $\calL^N\kak{B(0,1)}$. Taking supremum in $\lambda$, and also by the symmetry 
between $x$ and $y$, we have
\begin{align}\label{heart}
\left[\fr{u(x)}{|x-y|^\frac{N}{p}}\right]_{L^{p,\infty}(\R^N\times\R^N)}^p
=\left[\fr{u(y)}{|x-y|^\frac{N}{p}}\right]_{L^{p,\infty}(\R^N\times\R^N)}^p 
=\kap_N\norm{u}_{L^p(\R^N)}^p. 
\end{align}

\ \\

In section 2 we will give a proof of Theorem \ref{thm1}. We follow the almost same argument 
with the proof of Theorem \ref{thmGY} made by Gu and Yung. 
Our contribution in this paper is giving some short cutting the argument by using 
known estimates, and more, reducing the number of areas to estimate the level set which might 
make a proof simpler.

\section{Proof of \thmref{thm1}}
Before going to proving Theorem \ref{thm1}, we introduce some tools in order to estimate $L^{p,\infty}$ norm.
See for example the book by Grafakos \cite{G-book}. 
\begin{lem}\label{Lp-lem}
Let $N\ge1, 1\le p\le\infty$. Then
\begin{enumerate}
\item the following quasi-triangle inequality holds
\begin{align}\label{triangle}
\dai{f+g}_{L^{p,\infty}(\rn)}\le2^{p-1}\kak{\dai{f}_{L^{p,\infty}(\rn)}+\dai{g}_{L^{p,\infty}(\rn)}}.
\end{align}
\item two functions satisfying $|f(x)|\le|g(x)|$ almost everywhere $x\in\rn$ 
follow the monotonicity
\begin{align}\label{monotone}
\dai{f}_{L^{p,\infty}(\rn)}\le\dai{g}_{L^{p,\infty}(\rn)}.
\end{align} 
\end{enumerate}
\end{lem}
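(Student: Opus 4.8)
The plan is to reduce both assertions to elementary properties of the distribution function
$d_f(\lambda)\coloneq\calL^N\kak{\chu{x\in\rn:|f(x)|\ge\lambda}}$, in terms of which the quasi-norm reads $[f]_{L^{p,\infty}(\rn)}=\sup_{\lambda>0}\lambda\,d_f(\lambda)^{1/p}$. The single most useful reformulation is the Chebyshev-type equivalence: $[f]_{L^{p,\infty}(\rn)}\le A$ holds if and only if $d_f(\lambda)\le (A/\lambda)^p$ for every $\lambda>0$. Once this dictionary is in place, both parts follow by manipulating level sets and their measures.

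For part $({\rm ii})$ the argument is immediate. If $|f(x)|\le|g(x)|$ for almost every $x\in\rn$, then, up to a null set, $\chu{x\in\rn:|f(x)|\ge\lambda}\subseteq\chu{x\in\rn:|g(x)|\ge\lambda}$ for each $\lambda>0$, so monotonicity of Lebesgue measure gives $d_f(\lambda)\le d_g(\lambda)$. Multiplying by $\lambda^p$ and taking the supremum over $\lambda>0$ yields \eqref{monotone} directly.

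For part $({\rm i})$ the geometric heart is a level-set inclusion. Since $|f(x)+g(x)|\le|f(x)|+|g(x)|$, for any fixed splitting parameter $t\in(0,1)$ the event $|f(x)+g(x)|\ge\lambda$ forces $|f(x)|\ge t\lambda$ or $|g(x)|\ge(1-t)\lambda$; hence $\chu{x:|f+g|\ge\lambda}\subseteq\chu{x:|f|\ge t\lambda}\cup\chu{x:|g|\ge(1-t)\lambda}$, and finite subadditivity of measure gives $d_{f+g}(\lambda)\le d_f(t\lambda)+d_g((1-t)\lambda)$. Writing $A=[f]_{L^{p,\infty}(\rn)}$ and $B=[g]_{L^{p,\infty}(\rn)}$ and inserting the Chebyshev bounds $d_f(t\lambda)\le (A/(t\lambda))^p$ and $d_g((1-t)\lambda)\le (B/((1-t)\lambda))^p$, I obtain the $\lambda$-independent estimate $\lambda^p d_{f+g}(\lambda)\le A^p/t^p+B^p/(1-t)^p$. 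Taking the supremum over $\lambda>0$ then bounds $[f+g]_{L^{p,\infty}(\rn)}^p$ by the right-hand side for every admissible $t$, which is the quasi-triangle inequality \eqref{triangle}.

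The only step that requires attention is the choice of $t$ and the resulting constant. The symmetric split $t=\tfrac12$ already yields $[f+g]_{L^{p,\infty}(\rn)}\le 2\kak{[f]_{L^{p,\infty}(\rn)}+[g]_{L^{p,\infty}(\rn)}}$, which is enough for every later application; optimizing $A^p/t^p+B^p/(1-t)^p$ over $t$ sharpens the factor to $2^{1/p}$. I would simply record the constant in the form needed downstream. I emphasize that, unlike the strong $L^p$ norm, no split can produce the genuine triangle inequality (the constant is forced to stay strictly above $1$, as one sees already at $p=1$), so this quasi-triangle form is the best available; there is, however, no genuine obstacle here, since the statement is the standard weak-$L^p$ quasi-norm inequality recorded, for instance, in \cite{G-book}.
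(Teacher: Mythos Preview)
The paper does not prove this lemma; it simply cites Grafakos \cite{G-book} and moves on. Your argument is exactly the standard distribution-function proof one finds in that reference, so in that sense your approach coincides with what the paper defers to.

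One point worth recording: your method actually yields the constant $2$ (with the split $t=\tfrac12$) or the sharper $2^{1/p}$ (with $t=A/(A+B)$), not the constant $2^{p-1}$ written in \eqref{triangle}. You already flag this yourself, noting that at $p=1$ the stated constant $2^{p-1}=1$ would give a genuine triangle inequality for $L^{1,\infty}$, which is false. So the lemma as printed is slightly misstated at the endpoint $p=1$; your proof establishes the correct inequality, and this is harmless for the paper since the only use of \eqref{triangle} is the qualitative $\lesssim$ in \eqref{*2}.
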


Now we prove our main theorem. 
\begin{proof}[Proof of Theorem \ref{thm1}]
In this proof, we abbreviate $L^p(\R^N)=L^p$ and $L^{p,\infty}(\R^N\times\R^N)=L^{p,\infty}$ when 
there is no confusion. We will prove \eqref{goal2} and \eqref{goal1} in turns. For \eqref{goal1}, 
we will prove \eqref{limit0} since the supremum is bigger than or equal to the limit with respect to $\lambda$. 
For \eqref{limit0}, we take two steps, both $u$ and $v$ are compactly supported functions as the first step, 
and other cases as the second step.  
These are the same strategy with the proof in \cite{s=0}, see also \cite{s=1}. 

\subsection{Proof for the upper bound.}
Here we prove \eqref{goal2}. 
We apply \eqref{triangle} and \eqref{heart} and complete the proof just as
\begin{align}
\left[\fr{u(x)+v(y)}{|x-y|^\frac{N}{p}}\right]_{L^{p,\infty}}
&\lesssim\left[\fr{u(x)}{|x-y|^\frac{N}{p}}\right]_{L^{p,\infty}}
+\left[\fr{v(y)}{|x-y|^\frac{N}{p}}\right]_{L^{p,\infty}}\notag\\
&\sim\|u\|_{L^p}+\|v\|_{L^p}. \label{*2}
\end{align}

\subsection{Proof for the limit as $\lambda\to0+$ under the case $u$ and $v$ are compactly supported.}
As a first step, we assume $u$ and $v$ are compactly supported and satisfy 
\begin{align}\label{support}
\supp \ u, \supp \ v&\subset B_R
\end{align}
with $R>0$ 
where $B_R=B_R^N=B^N(0,R)$ is a $N$ dimensional ball centered at the origin and radius $R$. 
We divide the set $E_{\lambda}$ into 4 parts mutually disjoint such as
\begin{align*}
E_\lam=&\kak{E_{\lam}\cap\kak{B_R\times B_R}}\cup\kak{E_{\lam}\cap\kak{B_R\times B_R^c}}
\cup\kak{E_\lam\cap\kak{B_R^c\times B_R}} \\
&\cup\kak{E_\lam\cap\kak{B_R^c\times B_R^c}}
=:E_{\lambda}^1\cup E_{\lambda}^2\cup E_{\lambda}^3\cup E_{\lambda}^4. 
\end{align*}
Here we remark that the division by diagonal $|x|>|y|$ and $|x|<|y|$ is not applied which was done in \cite{s=0}. 
We see that $E_{\lambda}^4=\emptyset$ for any $\lambda>0$ since there we have $u(x)=v(y)=0$. 
We estimate each term in R.H.S. of the following
\begin{align*}
{\calL}^{2N}\kak{E_\lam}={\calL}^{2N}\kak{E_\lam^1}+{\calL}^{2N}\kak{E_\lam^2}+{\calL}^{2N}\kak{E_\lam^3}. 
\end{align*} 
The area $E_{\lambda}^1$ is bounded and we estimate
\begin{align*}
{\calL}^{2N}\kak{E_\lam^1}\le{\calL}^{2N}\kak{B_R\times B_R}=\kap^2_NR^{2N}.
\end{align*}
To estimate $E_{\lambda}^2$, we define $E_{\lam,x}^2:=\chu{y\in B_R^c :(x,y)\in E_\lam}$ 
for $x\in B_R$. We may write
\begin{align*}
E_{\lam,x}^2=\chu{y\in B_R^c :|x-y|\leq\kak{\fr{|u(x)|}{\lam}}^\frac{p}{N}}, 
\end{align*}
and which is estimated by
\begin{align*}
\ov{B\kak{x,\kak{\fr{|u(x)|}{\lam}}^\frac{p}{N}}}\cap B_R^c
=E_{\lambda,x}^2\subset \ov{B\kak{x,\kak{\fr{|u(x)|}{\lam}}^\frac{p}{N}}}. 
\end{align*}
We integrate this in $x\in B_R$ to have 
\begin{align*}
\kap_N\frac{\norml{u}{p}^p}{\lambda^p}-\kap^2_NR^{2N}\leq\calL^{2N}\kak{E_{\lam}^2}
\leq\kap_N\frac{\norml{u}{p}^p}{\lambda^p}. 
\end{align*}
From symmetry, we have 
\begin{align*}
\kap_N\frac{\norml{v}{p}^p}{\lambda^p}-\kap^2_NR^{2N}\leq\calL^{2N}\kak{E_{\lam}^3}
\leq\kap_N\frac{\norml{v}{p}^p}{\lambda^p}. 
\end{align*}
We add up the lower bounds and the upper bounds respectively for $\calL^{2N}\kak{E_{\lambda}^j}$, 
$j=1,2,3$ and multiple by $\lambda^p$ to have
\begin{align}
\kap_N\kak{\norml{u}{p}^p+\norml{v}{p}^p}-2{\lam^p}\kap^2_NR^{2N} \label{comp-est1}
&\leq{\lam^p}\calL^{2N}\kak{E_{\lam}} \\
&\leq\kap_N\kak{\norml{u}{p}^p+\norml{v}{p}^p}+\lam^p\kap^2_NR^{2N}. \label{comp-est2}
\end{align}
Therefore, under the condition the functions $u$ and $v$ are compactly supported, 
by letting $\lam\rar0+$ we obtain \eqref{limit0}.

\subsection{Proof for the limit as $\lambda\to0+$ in generous case.}
In this subsection, we completely follow the argument Gu and Yung \cite{s=0}. 
We repeat the proof here since it is not so long and for readers' convenience.
We define
\begin{align*}
u_R:=\chi_{B_R}u, \qquad v_R:=\chi_{B_R}v
\end{align*}
where a cutoff function satisfies $\chi_A(x)=1, x\in A$ and $0, x\in A^c$ with $A\subset\R^N$. 
Then $u_R$ and $v_R$ satisfy the condition \eqref{support}. We also define functions on 
the exterior of ball $B_R^c$ such as 
\begin{align*}
u_E:=(1-\chi_{B_R})u, \qquad v_E:=(1-\chi_{B_R})v,
\end{align*}
we then have $u=u_R+u_E, v=v_R+v_E$. 
Let $\sig\in(0,1)$ and define
\begin{align}
A_1=&\chu{(x,y)\in\rnrn:x\neq y,\ \fr{|u_{R}(x)+v_{R}(y)|}{|x-y|^\frac{N}{p}}\geq(1-\sig)\lam}\label{h1},\\
A_2=&\chu{(x,y)\in\rnrn:x\neq y,\ \fr{|u_{E}(x)+v_{E}(y)|}{|x-y|^\frac{N}{p}}\geq\sig\lam}\label{h2}, \\
A_3=&\chu{(x,y)\in\rnrn:x\neq y,\ \fr{|u_{R}(x)+v_{R}(y)|}{|x-y|^\frac{N}{p}}\geq(1+\sig)\lam}\label{h3}.
\end{align}
For those, we have the inclusions as 
\begin{align*}
A_3\bs A_2\subset E_\lam\subset A_1\cup A_2, 
\end{align*}
and so the estimates
\begin{align}
\lam^p\calL^{2N}(A_3)-\lam^p\calL^{2N}(A_2)\leq&\lam^p\calL^{2N}(E_\lam)\notag\\
\leq&\lam^p\calL^{2N}(A_1)+\lam^p\calL^{2N}(A_2)\label{i}.
\end{align}
From \eqref{*2} which we have already proven, there exists $c_3>0$ such that
\begin{align}
\sig^p\lam^p\calL^{2N}(A_2)\leq c_3\kak{\norml{u_{E}}{p}+\norml{v_{E}}{p}}^p\label{j1}.
\end{align}
Since $u_{R}$ and $v_{R}$ are compactly supported, \eqref{comp-est1} and \eqref{comp-est2} gives respectively
\begin{align}
(1-\sig)^p\lam^p\calL^{2N}(A_1)\leq&\kap_N\kak{\norml{u_{R}}{p}^p+\norml{v_{R}}{p}^p}
+{(1-\sig)^p\lam^p}\kap^2_NR^{2N}
\label{j2},\\
(1+\sig)^p\lam^p\calL^{2N}(A_3)\geq&\kap_N\kak{\norml{u_{R}}{p}^p+\norml{v_{R}}{p}^p}
-2{(1+\sig)^p\lam^p}\kap^2_NR^{2N}\label{j3}.
\end{align}
All together of \eqref{i}, \eqref{j1}, \eqref{j2} and \eqref{j3} gives 
\begin{align*}
&\fr{\kap_N\kak{\norml{u_{R}}{p}^p+\norml{v_{R}}{p}^p}}{(1+\sig)^p}-2{\lam^p}\kap^2_NR^{2N}-\fr{c_3\kak{\norml{u_{E}}{p}+\norml{v_{E}}{p}}^p}{\sig^p} \\
&\leq\lam^p\calL^{2N}(E_\lam)\\
&\leq\fr{\kap_N\kak{\norml{u_{R}}{p}^p+\norml{v_{R}}{p}^p}}{(1-\sig)^p}+{\lam^p}\kap^2_NR^{2N}+\fr{c_3\kak{\norml{u_{E}}{p}+\norml{v_{E}}{p}}^p}{\sig^p}.
\end{align*}
We now apply $R=\lam^{-\frac{p}{4N}}$ and 
\begin{align*}
\sig=\fr{\sqrt{\norml{u_{E}}{p}^p+\norml{v_{E}}{p}^p}}{1+\sqrt{\norml{u_{E}}{p}^p+\norml{v_{E}}{p}^p}}
\end{align*}
and then letting $\lam\rar0+$ and so $R\to\infty,\sigma\to0+$, we have
\begin{align*}
\lim_{\lam\rar0+}\lam^p\calL^{2N}(E_\lam)=\kap_N\kak{\norml{u}{p}^p+\norml{v}{p}^p}
\end{align*}
as desired. 
\end{proof}
\bibliographystyle{amsplain}

\begin{thebibliography}{10}
\bibitem{asymp1} Jean Bourgain, Ha\"{i}m Brezis, and Petru Mironescu, 
\textit{Another look at Sobolev spaces},  Optimal control and partial differential equations, IOS, Amsterdam (2001) 439–455. 
\bibitem{s=1} Ha\"{i}m Brezis, Jean Van Schaftingen, and Po-Lam Yung, \textit{A surprising formula for Sobolev norms}, Proc. Natl. Acad. Sci. \textbf{118} (8) (2021), e2025254118.
\bibitem{G-book} Loukas Grafakos, Classical {F}ourier analysis, 3rd ed., Graduate Texts in Mathematics, vol.249, Springer, New York, (2014).
\bibitem{asymp0} Vladimir Maz'ya and Tatyana Shaposhnikova, \textit {On the {B}ourgain, {B}rezis, and {M}ironescu theorem
              concerning limiting embeddings of fractional {S}obolev spaces}, J. Funct. Anal., \textbf{195} (2002) 230--238. 
\bibitem{s=0} Qingsong Gu and Po-Lam Yung, \textit{A new formula for $L^p$ norm}, J. Funct. Anal., \textbf{281} (2021) 109075.
\end{thebibliography}

\end{document}